\newcommand{\complex}{{\mathbb C}}
\newcommand{\cdisc}{{\mathbb D}}
\newcommand{\cstar}{{\mathbb C}^*}
\newcommand{\projspace}[1]{{\mathbb P}^{#1}}
\newcommand{\integers}{{\mathbb Z}}
\renewcommand{\emptyset}{\varnothing}
\newcommand{\scriptO}{{\mathcal O}}
\newcommand{\littlecomp}{{(X\times\complex)\setminus\Gamma_m}}
\newcommand{\bigcomp}{{\complex^{n+1}\setminus\Gamma_{m\circ\psi}}}
\theoremstyle{plain}
\newtheorem{theorem}{Theorem}[section]
\newtheorem{lemma}[theorem]{Lemma}
\newtheorem{proposition}[theorem]{Proposition}
\newtheorem{corollary}[theorem]{Corollary}
\theoremstyle{definition}
\newtheorem{definition}[theorem]{Definition}
\newtheorem{example}[theorem]{Example}
\theoremstyle{remark}
\newtheorem{remark}[theorem]{Remark}
\begin{document}

\date{28th November 2011; revised 19th April 2012}
\author{Alexander Hanysz}
\address{School of Mathematical Sciences, University of Adelaide,
    Adelaide SA 5005, Australia}
\email{alexander.hanysz@adelaide.edu.au}
\title{Oka properties of some hypersurface complements}
\begin{abstract}
  Oka manifolds can be viewed as the ``opposite''
  of Kobayashi hyperbolic manifolds.
  Kobayashi asked whether the complement in projective space
  of a generic hypersurface
  of sufficiently high degree is hyperbolic.
  Therefore it is natural to investigate Oka properties
  of complements of low degree hypersurfaces.
  We determine which complements of hyperplane arrangements
  in projective space are Oka.
  A related question is which hypersurfaces
  in affine space have Oka complements.
  We give some results for graphs of meromorphic functions.
\end{abstract}
\subjclass[2010]{Primary 32Q28. Secondary 14J70, 32H02, 32H04, 32Q45, 32Q55, 52C35}
\keywords{Stein manifold, Oka manifold, Oka map, subelliptic manifold, spray,
hyperbolic manifold, hypersurface complement, hyperplane arrangement,
meromorphic function}

\maketitle

\section{Introduction}

A complex manifold $X$ is {\em hyperbolic} (in the sense of Kobayashi) if,
informally speaking, there are ``few'' maps $\complex\to X$,
and {\em Oka} if there are ``many'' maps $\complex\to X$,
in a sense to be made precise in Section~\ref{section:Oka} below.
Hyperbolic manifolds have been extensively studied since
the late 1960s.
Oka theory is a more recent development,
motivated by Gromov's paper \cite{Gromov-1989} of 1989;
the definition of an Oka manifold
was only published in 2009, by Forstnerič \cite{Forstneric-2009}.

Many interesting examples of hyperbolic manifolds arise from
complements of projective hypersurfaces.
In particular, Kobayashi asked
\cite[problem~3 on page~132]{Kobayashi-1970}
whether the complement in $\projspace{n}$
of a generic hypersurface of sufficiently high degree
should be hyperbolic.
This has been proved for $n=2$
by Siu and Yeung \cite{Siu-Yeung-1996a},
but is still an open problem in higher dimensions.
The degenerate case of the complement of a finite collection
of hyperplanes is well understood.
In particular, the complement in $\projspace{n}$
of at least $2n+1$ hyperplanes
in general position is hyperbolic,
and the complement of a collection of $2n$ or fewer hyperplanes
is never hyperbolic.
For hyperplanes not in general position,
some necessary conditions for hyperbolicity are known.
See Kobayashi's monograph
\cite[Section~3.10]{Kobayashi-1998} for details.

Since the Oka property can be viewed as a sort of anti-hyperbolicity,
it makes sense to ask which hypersurfaces have Oka complements.
In Section~\ref{section:hyperplane} of this paper
we give a complete answer to this question
for complements of hyperplane arrangements in projective space.
The main result of this section,
Theorem~\ref{thm:hyperplanes}, states
that the complement of $N$ hyperplanes in $\projspace{n}$
is Oka if and only if the hyperplanes are in general position
and $N\leq n+1$.
We also investigate the weaker Oka-type properties
of dominability by $\complex^n$ and $\complex$-connectedness:
in this context we find that a non-Oka complement
also fails to possess these weaker properties.

In Section~\ref{section:graph} we give a sufficient condition
for the complement
of the graph of a meromorphic function to be Oka.
Our Theorem~\ref{thm:mero_general_base} states that
if $m:X\to\projspace{1}$ can be written in the form
$m=f+1/g$ for holomorphic functions $f$ and $g$,
then the graph complement is Oka if and only if $X$ is Oka.
This is motivated by the open problem
of whether the complement in $\projspace{2}$
of a smooth cubic curve is Oka:
given a cubic curve,
there is an associated meromorphic function
and a branched covering map from the graph complement of that function
to the cubic complement.
For details, see Buzzard and~Lu \cite[page 644--645]{Buzzard-Lu-2000}.
We also explore the question of when the decomposition $m=f+1/g$ exists
(Lemma~\ref{lemma:mero_sum}).
For meromorphic functions that cannot be written in this form,
further work is required to understand the Oka properties
of the graph complements.

\noindent\textit{Acknowledgements.}
I thank Finnur Lárusson for
many helpful discussions during the preparation of this paper,
and Franc Forstnerič
for making available preliminary drafts
of his book \cite{Forstneric-2011}.
I am also grateful to the anonymous referee for
constructive suggestions,
in particular the proof of the Oka property
in Example~\ref{ex:not_standard_form}
for the case $\nu=1$.

\section{Oka manifolds and hyperbolic manifolds} \label{section:Oka}

In this section we recall the definitions
of Oka manifolds and hyperbolic manifolds,
and collect some results that will be used later.
For background, motivation and further details of Oka theory, see
the survey article \cite{Forstneric-Larusson-2011}
of Forstneri{\v{c}} and L{\'{a}}russon
and the recently published book \cite{Forstneric-2011} of Forstneri{\v{c}}.
For more on hyperbolicity,
see the monograph \cite{Kobayashi-1998} of Kobayashi.

\begin{definition} \label{def:oka}
A complex manifold $X$ is an \emph{Oka manifold}
if every holomorphic map $K\to X$ from (a neighbourhood of)
a convex compact subset
$K$ of $\complex^n$ can be approximated uniformly on $K$
by holomorphic maps $\complex^n\to X$.
\end{definition}

This defining property is also referred to as the
\emph{convex approximation property (CAP)}.

\begin{definition}
The \emph{Kobayashi pseudo-distance} on a complex manifold $X$
is the largest pseudo-distance such that every holomorphic map
$\cdisc\to X$ is distance-decreasing,
where $\cdisc$ denotes the complex unit disc
with the Poincaré metric.
We say that $X$ is \emph{hyperbolic}
if the Kobayashi pseudo-distance is a distance.
\end{definition}

If $X$ is Oka then the Kobayashi pseudo-distance on $X$ is identically zero;
thus Oka manifolds can be viewed as ``anti-hyperbolic''.
The most fundamental examples of Oka manifolds are complex Lie groups
and their homogeneous spaces; in particular,
$\projspace{n}$ and $\complex^n$ are Oka.
Bounded domains in $\complex^n$ are always hyperbolic.
If $X$ is a Riemann surface,
then $X$ is Oka if and only if it is one of $\complex$,
$\cstar$ (the punctured plane), $\projspace{1}$ or a torus;
otherwise it is hyperbolic.

Every Oka manifold $X$ of dimension $n$ is \emph{dominable}
by $\complex^n$, in the sense that there exists a holomorphic map
$\complex^n\to X$ that has rank~$n$ at some point of $\complex^n$.

Oka manifolds are also $\complex$-connected: every pair of points
can be joined by an entire curve,
i.e.\ for any pair of points there exists a holomorphic map
from $\complex$ into the manifold whose image contains both points.
This property is mentioned by Gromov~\cite[3.4(B)]{Gromov-1989},
and follows easily from the ``basic Oka property''
described in \cite[page~16]{Forstneric-Larusson-2011}.
(The definition of \emph{$\complex$-connected} is not standardised:
the term can also refer to the weaker property
that every pair of points can be joined
by a finite chain of entire curves,
by analogy with the case of rational connectedness.)

In general it is difficult to verify the condition of
Definition~\ref{def:oka} directly.
Instead, sprays (in the sense of Gromov: see below) and fibre bundles
are of fundamental importance.
If $\pi:X\to Y$ is a holomorphic fibre bundle
with Oka fibres, then $X$ is Oka if and only if $Y$ is Oka.
(In fact there is a far more general notion of
an \emph{Oka map} which preserves the Oka property, but this will
not be needed here.)
In particular, products of Oka manifolds are Oka,
and a manifold is Oka if it has a covering space
that is Oka.

\begin{definition} \label{def:spray}
A \emph{spray} over a complex manifold $X$ consists of
a holomorphic vector bundle $\pi:E\to X$
and a holomorphic map $s:E\to X$
such that $s(0_x)=x$ for all $x\in X$.
We say that $s$ is \emph{dominating} at the point $x\in X$
if the differential $ds_{0_x}$ maps the vertical subspace $E_x$ of $T_{0_x}E$
surjectively onto $T_xX$.
A family of sprays $(E_j,\pi_j,s_j)$, $j=1,\ldots,m$,
is \emph{dominating} at $x$ if
$$(ds_1)_{0_x}(E_{1,x})+\cdots+(ds_m)_{0_x}(E_{m,x})=T_xX.$$
The manifold $X$ is \emph{elliptic} if there exists a spray
that is dominating at every point of $X$,
and \emph{weakly subelliptic} if for every compact set $K\subset X$
there exists a finite family of sprays over $X$
that is dominating at every point of $K$.
\end{definition}

The concept of a spray can be viewed as a generalisation
of the exponential map for a complex Lie group:
for example, see \cite[Examples~5.3]{Forstneric-Larusson-2011}
or \cite[Proposition~5.5.1]{Forstneric-2011}.

Every elliptic or weakly subelliptic manifold is Oka.

The following property is equivalent to the CAP.

\begin{definition} \label{def:cip}
A complex manifold $X$ satisfies the
\emph{convex interpolation property (CIP)}
if whenever $T$ is a contractible subvariety of $\complex^m$ for some $m$,
every holomorphic map $T\to X$ extends
to a holomorphic map $\complex^m\to X$.
\end{definition}
(Equivalently, we could take $T$
to be any subvariety of $\complex^m$ that is biholomorphic
to a convex domain in $\complex^n$; hence the use of the word \emph{convex}.)

A useful tool for proving hyperbolicity is
Borel's generalisation of Picard's little theorem.
Kobayashi gives three equivalent formulations
(see \cite[Theorem~3.10.2 on page~134]{Kobayashi-1998}),
of which we only need the following.

\begin{theorem}[Picard--Borel] \label{thm:picard-borel}
Let $g_0,\ldots,g_N$ be nowhere vanishing
holomorphic functions on $\complex$, and suppose
$$g_0+\cdots+g_N=0.$$
Partition the index set $\{0,1,\ldots,N\}$
into subsets, putting two indices
$j$ and $k$ into the same subset
if and only if $g_j/g_k$ is constant.
Then for each subset $J$,
$$\sum_{j\in J}g_j=0.$$
\end{theorem}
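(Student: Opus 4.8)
The plan is to reduce the statement to the case of an \emph{irreducible} relation and then to invoke the value-distribution input that makes this a Picard-type theorem. Say that a nonempty subset $I\subseteq\{0,\ldots,N\}$ is \emph{vanishing} if $\sum_{j\in I}g_j=0$, and that a vanishing set is \emph{irreducible} if none of its proper nonempty subsets is vanishing. Starting from the hypothesis that $\{0,\ldots,N\}$ is vanishing, I would repeatedly split: if a vanishing set $I$ has a proper nonempty vanishing subset $J$, then its complement $I\setminus J$ is also vanishing, since $\sum_{j\in I\setminus J}g_j=\sum_{j\in I}g_j-\sum_{j\in J}g_j=0$. As $I$ is finite this process terminates and partitions $\{0,\ldots,N\}$ into irreducible vanishing sets $I_1,\ldots,I_p$. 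Note that each $I_k$ has at least two elements, since a singleton would force some $g_j=0$.

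The crux is then the following claim: \emph{in an irreducible vanishing set, every ratio $g_j/g_k$ is constant.} Granting it, each $I_k$ lies inside a single block of the partition defined in the theorem (the blocks being the classes of the ``constant ratio'' equivalence relation), so each block is a union of some of the sets $I_k$. As every $I_k$ is vanishing, so is every block, which is exactly the assertion $\sum_{j\in J}g_j=0$.

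To prove the claim I would induct on the size of the irreducible set, relabelled as $\{0,\ldots,n\}$. The case $n=1$ is clear: $g_0+g_1=0$ gives $g_0/g_1=-1$. For the inductive step, divide by $g_0$ and put $f_j=g_j/g_0$, so that $1+f_1+\cdots+f_n=0$ with every $f_j$ nowhere vanishing on $\complex$; suppose, for a contradiction, that some $f_j$ is non-constant. Differentiating annihilates the constant term and reduces the number of summands, giving $\sum_{j\in B}f_j'=0$, where $B$ is the set of indices with $f_j$ non-constant, and $|B|\geq 2$.

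I expect the closing of this induction to be the main obstacle. One cannot feed $\sum_{j\in B}f_j'=0$ naively back into the inductive hypothesis: the derivatives $f_j'$ may now have zeros, and integrating a sub-relation of the $f_j'$ reintroduces a constant of integration, so no purely algebraic bookkeeping forces a vanishing all-ones subsum. This is precisely where genuine analysis must enter. Geometrically, because each $g_j$ is nowhere vanishing and $\sum_{j=0}^n g_j=0$, the holomorphic map $(g_0:\cdots:g_n)\colon\complex\to\projspace{n}$ omits the $n+2$ hyperplanes $\{z_0=0\},\ldots,\{z_n=0\}$ and $\{z_0+\cdots+z_n=0\}$, which are in general position. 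I would control the induction using the Nevanlinna lemma on the logarithmic derivative, the analytic core of Cartan's Second Main Theorem for these omitted hyperplanes, whose content already generalises Picard's little theorem. This growth estimate is what rules out a genuinely new non-constant configuration being produced by differentiation, and so forces every $f_j$ to be constant, contradicting the choice of $B$ and completing the induction.
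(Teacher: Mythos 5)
First, a point of orientation: the paper does not prove this theorem at all. It is quoted as a classical result with a reference to Kobayashi's monograph (Theorem~3.10.2 there), so your attempt must be judged as a stand-alone proof rather than against an internal argument. Your combinatorial scaffolding is correct and is the standard one: splitting the index set into irreducible vanishing sets, noting that singletons cannot vanish, and deducing the partition statement from the claim that all ratios within an irreducible vanishing set are constant. That reduction is sound.

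The genuine gap is that the claim is never proved, and the claim is not a stepping stone towards the theorem: it is \emph{equivalent} to it. (Your reduction gives one direction; conversely, applying the partition statement to an irreducible relation forces a single block, i.e.\ constant ratios.) So ``granting the claim'' grants everything, and all the analytic content of Borel's theorem lives exactly there. Your induction stalls precisely where you say it does---after differentiating, the $f_j'$ may vanish, so the inductive hypothesis no longer applies---but the repair you offer is a pointer to the literature, not an argument: you never formulate the growth estimate to be proved, the induction statement to which the logarithmic derivative lemma would apply, or how zeros of $f_j'$ are absorbed. Moreover, the one piece of geometric setup you do supply is wrong: the curve $(g_0:\cdots:g_n):\complex\to\projspace{n}$ does not \emph{omit} the hyperplane $\{z_0+\cdots+z_n=0\}$; its image is \emph{contained} in that hyperplane, since the sum vanishes identically. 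The correct configuration is the curve $(g_0:\cdots:g_{n-1})$ in $\projspace{n-1}$, which omits the $n$ coordinate hyperplanes and also $\{z_0+\cdots+z_{n-1}=0\}$ (that sum equals $-g_n$, which is nowhere zero), giving $n+1=(n-1)+2$ hyperplanes in general position. As written, then, your proposal is a correct elementary reduction followed by a citation of the hard core---in effect the same move the paper makes by citing Kobayashi---and to become a proof it would need Borel's lemma (pairwise non-proportional nowhere vanishing entire functions are linearly independent) actually carried out via Nevanlinna's lemma on the logarithmic derivative.
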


\begin{remark} \label{remark:borel-picard}
Since the $g_j$ are nowhere vanishing,
it follows that each subset must have size greater than 1.
In particular, if $N=2$ then the partition has only one part,
hence $g_0$, $g_1$ and~$g_2$ are constant multiples of each other.
\end{remark}

\section{Hyperplane complements} \label{section:hyperplane}

Let $F_1,\ldots,F_N$ be nonzero homogeneous linear forms in $n+1$ variables.
We say that the hyperplanes in $\projspace{n}$ defined by the equations
$F_j=0$, $j=1,\ldots,N$, are in \emph{general position}
if every subset of $\{F_1,\ldots,F_N\}$ of size at most $n+1$ is
linearly independent.
If $N\leq n+1$, then a set of $N$ hyperplanes
is in general position if and only if coordinates can be chosen
on $\projspace{n}$ so that the given hyperplanes
are the coordinate hyperplanes $x_j=0$, $j=0,\ldots,N-1$.

\begin{theorem} \label{thm:hyperplanes}
Let $H_1,\ldots,H_N$ be distinct hyperplanes in $\projspace{n}$.
Then the complement $X=\projspace{n}\setminus\bigcup_{j=1}^N H_j$ is Oka
if and only if the hyperplanes are in general position and $N\leq n+1$.
Furthermore, if $X$ is not Oka
then it is not dominable by $\complex^n$
and not $\complex$-connected.
\end{theorem}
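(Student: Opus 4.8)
The plan is to recast the positional hypothesis in linear-algebraic terms. Since the $H_j$ are distinct, no two defining forms are proportional, so a subset of size at most $n+1$ is linearly dependent exactly when it is a genuine dependency; consequently the forms $F_1,\dots,F_N$ are in general position with $N\le n+1$ if and only if they are \emph{linearly independent} (one cannot have more than $n+1$ independent forms in $n+1$ variables). Thus Theorem~\ref{thm:hyperplanes} is equivalent to: $X$ is Oka iff $F_1,\dots,F_N$ are linearly independent, and otherwise $X$ is neither dominable by $\complex^n$ nor $\complex$-connected. Because the facts recalled in Section~\ref{section:Oka} tell us that every Oka manifold is dominable and $\complex$-connected, establishing this last assertion will also supply the ``only if'' direction. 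For the ``if'' direction I would assume independence, extend $F_1,\dots,F_N$ to a basis of the space of linear forms, and change projective coordinates so that $H_j=\{x_{j-1}=0\}$. Dehomogenising via $x_0=1$ (legitimate since $x_0\neq 0$ on $X$) identifies $X$ biholomorphically with $(\cstar)^{N-1}\times\complex^{\,n-N+1}$, a complex Lie group, hence Oka.

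For the converse, suppose the forms are linearly dependent and choose a \emph{minimal} dependent subset $\{F_{j_0},\dots,F_{j_k}\}$. Minimality yields a relation $\sum_{i=0}^{k}c_iF_{j_i}=0$ with every $c_i\neq0$, and distinctness of the hyperplanes forces $k\ge 2$ (size at least $3$). The common mechanism behind both failures is this: any holomorphic map $\phi$ from $\complex$ (or from $\complex^n$) into $X$ lifts to a map $\tilde\phi$ into $\complex^{n+1}\setminus\{0\}$, since the pullback of $\scriptO(-1)$ is trivial over a contractible Stein domain; setting $g_i:=c_i\,F_{j_i}\circ\tilde\phi$ gives nowhere-vanishing holomorphic functions with $\sum_i g_i=0$. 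Picard--Borel (Theorem~\ref{thm:picard-borel}) together with Remark~\ref{remark:borel-picard} then forces at least one pair $i\neq i'$ with $g_i/g_{i'}$ constant, i.e.\ the image of $\phi$ lies in a hyperplane of the form $\{F_{j_i}=\mu F_{j_{i'}}\}$ for some $\mu\in\complex$.

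To conclude that $X$ is not $\complex$-connected, I would exploit that each ratio $F_{j_i}/F_{j_{i'}}$ is a non-constant rational function on $X$. Hence, for each of the finitely many pairs $(i,i')$, the set of $(p,q)\in X\times X$ on which $p$ and $q$ share a common value of that ratio is a proper analytic subset; their union is still proper, so a generic pair $(p,q)$ lies outside all of them. The constraint of the previous paragraph then shows that no entire curve can pass through both $p$ and $q$, so $X$ is not $\complex$-connected.

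For non-dominability, a dominating map $h\colon\complex^n\to X$ would have rank $n$ at some point and therefore could not map into any hyperplane. The main obstacle is that Picard--Borel as stated is one-dimensional, so I must upgrade the ``constant ratio'' conclusion to a single pair that is constant \emph{globally} on $\complex^n$. I expect to obtain this by restricting $h$ to complex lines---on each line some pair has constant ratio---and then running a Baire/analyticity argument over the finitely many pairs: if every $F_{j_i}/F_{j_{i'}}\circ\tilde h$ were non-constant, a generic line would violate the line-wise conclusion. This produces one pair whose ratio is constant on $\complex^n$, forcing $h(\complex^n)$ into a single hyperplane $\cong\projspace{n-1}$ and contradicting rank $n$. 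Combining the two failures with the ``if'' direction then gives both the equivalence and the final ``furthermore'' clause.
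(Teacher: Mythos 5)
Your proposal is correct, and while it rests on the same two pillars as the paper's proof --- the coordinate identification of $X$ with $(\cstar)^{N-1}\times\complex^{n+1-N}$ for the positive direction, and Picard--Borel (Theorem~\ref{thm:picard-borel}) applied to a lifted map for the negative one --- the negative direction is organised quite differently. The paper splits into two non-Oka cases: for hyperplanes in general position with $N>n+1$ it cites Kobayashi's Theorem~3.6.10 \cite{Kobayashi-1998}, while for hyperplanes not in general position it proves the sharper structural Theorem~\ref{thm:subspaces} (every entire curve lies in a diagonal hyperplane or an associated subspace of each minimal relation), deducing non-dominability from a tangent-space count at a fixed point (Corollary~\ref{cor:notdom}) and non-$\complex$-connectedness from the resulting point-dependent finite family of subvarieties. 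You instead observe that the negation of ``general position and $N\leq n+1$'' is precisely linear dependence of the defining forms, which lets you treat both cases at once, and you extract from Picard--Borel only the weak consequence that along any entire curve some ratio $F_{j_i}/F_{j_{i'}}$ is constant. Your two genericity arguments --- a generic pair of points separates all of the finitely many ratios, hence cannot be joined by an entire curve; a dominating map would have some ratio globally constant, confining its image to a hyperplane and contradicting rank $n$ --- then replace the paper's machinery entirely. What you gain is a unified, self-contained argument needing neither the external citation nor the notions of diagonal hyperplane and associated subspace; what you give up is the finer information about \emph{where} entire curves live, which the paper exploits in the remark following Theorem~\ref{thm:hyperplanes} to rule out even connectedness by finite chains of entire curves (your single-invariant argument does not survive passage to chains, since the constant ratio may change from one curve to the next).

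The one step you leave as a sketch --- upgrading ``on every line some ratio is constant'' to ``some ratio is constant on all of $\complex^n$'' --- does go through, and no Baire category is needed. If every ratio $r_{ii'}=(F_{j_i}\circ\tilde h)/(F_{j_{i'}}\circ\tilde h)$ were non-constant, each set $\{dr_{ii'}=0\}$ would be a proper analytic subset of $\complex^n$, so there would exist a point $p$ at which all the finitely many differentials $dr_{ii'}(p)$ are nonzero, and then a direction $v\notin\bigcup_{(i,i')}\ker dr_{ii'}(p)$, because a complex vector space is never a finite union of proper subspaces; on the line $t\mapsto p+tv$ no ratio is constant, contradicting Picard--Borel applied to that line. (Also record the trivial case $N=0$, where your dehomogenisation has nothing to invert but $X=\projspace{n}$ is Oka.)
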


Before proving this, we state and prove a sharper form of
Theorem~3.10.15 of Kobayashi's book \cite[page~142]{Kobayashi-1998}.
To state the theorem, it is convenient to introduce the following terminology.

\begin{definition}
Let $H_1,\ldots,H_k$ be distinct hyperplanes in
$\projspace{n}$ defined by linear forms $F_1,\ldots,F_k$,
and suppose the forms satisfy a minimal linear relation
of the form
$$c_1F_1+\cdots+c_kF_k=0$$
where $c_j\neq0$ for all $j$.
(By ``minimal'' we mean that
$\sum_{j\in J}c_j F_j\neq0$ for every proper nonempty subset
$J$ of $\{1,\ldots,k\}$.)
Then the \emph{diagonal hyperplanes}
of the linear relation
are the hyperplanes defined by the linear forms
$\sum_{j\in J}c_j F_j$
where $J$ is a subset of $\{1,\ldots,k\}$
with $2\leq |J| \leq k-2$.
(If $k\leq3$, there are no diagonal hyperplanes.)
The \emph{associated subspaces}
of $\{H_1,\ldots,H_k\}$
are the linear subspaces of $\projspace{n}$ which contain
$\bigcap_{j=1}^k H_j$
with codimension 1.
(If $\bigcap H_j=\emptyset$, the associated subspaces
are exactly the points of $\projspace{n}$.)
\end{definition}

\begin{remark}
If $p\in\projspace{n}\setminus\bigcap H_j$,
then $p$ is contained in exactly one associated subspace
for each minimal linear relation.
\end{remark}

\begin{example}
On $\projspace{2}$ consider the linear forms
\begin{align*}
    F_1&=x_1, \\
    F_2&=x_2, \\
    F_3&=x_1-x_0, \\
    F_4&=x_2-x_0.
\end{align*}
If we consider $x_0=0$ to be the line at infinity,
then the lines $F_j=0$, $j=1,2,3,4$,
are the sides of a ``unit square'' in the affine plane.
The linear relation $F_1-F_2-F_3+F_4=0$
has three diagonal lines
(noting that $J=\{1,2\}$ and $J=\{3,4\}$ give the same line,
and so on).
They are the two diagonals of the square
($x_1=x_2$ and $x_1+x_2=x_0$),
and the line at infinity ($x_0=0$).
\end{example}

\begin{example}
Let $P$ be any point of $\projspace{2}$
and let $F_1$, $F_2$ and $F_3$
be linear forms defining three distinct lines through $P$.
Then there exists a linear relation among $F_1$, $F_2$ and $F_3$,
and the associated subspaces are
the lines through~$P$.
\end{example}

\begin{theorem}\label{thm:subspaces}
Let $H_1,\ldots,H_N$ be distinct hyperplanes in
$\projspace{n}$ defined by linear forms $F_1,\ldots,F_N$,
and let $f:\complex\to\projspace{n}\setminus\bigcup H_j$
be a holomorphic map.
Suppose that $F_1,\ldots,F_N$ are linearly dependent.
Then for each subset of $F_1,\ldots,F_N$ satisfying a minimal linear relation,
there is a diagonal hyperplane or an associated subspace containing
the image of~$f$.
\end{theorem}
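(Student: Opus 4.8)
The plan is to reduce everything to the Picard--Borel theorem (Theorem~\ref{thm:picard-borel}) by pulling the linear forms back to $\complex$ along $f$. First I would fix a subset satisfying a minimal relation; after relabelling, say it is $F_1,\dots,F_k$ with $c_1F_1+\dots+c_kF_k=0$, all $c_j\neq0$ and no proper subsum vanishing. Since $\complex$ is contractible and Stein, the principal $\cstar$-bundle $\complex^{n+1}\setminus\{0\}\to\projspace{n}$ pulls back to a trivial bundle over $\complex$, so $f$ admits a holomorphic lift $\tilde f:\complex\to\complex^{n+1}\setminus\{0\}$. Put $g_j=F_j\circ\tilde f$; because $f$ avoids $H_j$, each $g_j$ is nowhere vanishing, and the relation gives $\sum_{j=1}^k c_jg_j=\bigl(\sum_j c_jF_j\bigr)\circ\tilde f\equiv0$. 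Setting $h_j=c_jg_j$, the functions $h_1,\dots,h_k$ are nowhere vanishing and sum to zero, so Theorem~\ref{thm:picard-borel} applies: partitioning $\{1,\dots,k\}$ by the relation ``$h_j/h_l$ constant'' (equivalently ``$g_j/g_l$ constant''), every part $J$ satisfies $\sum_{j\in J}h_j=0$. By Remark~\ref{remark:borel-picard} each part has at least two elements.

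Next I would read off the geometry from each partial relation. Fix a part $J$. Then $\bigl(\sum_{j\in J}c_jF_j\bigr)\circ\tilde f\equiv0$, so the image of $f$ lies in the projective hyperplane $\{\sum_{j\in J}c_jF_j=0\}$, and this is a genuine hyperplane because minimality of the relation forbids $\sum_{j\in J}c_jF_j$ from vanishing identically on any proper nonempty $J$. If the partition is nontrivial, every part $J$ is proper and, since all parts have size at least $2$, satisfies $2\leq|J|\leq k-2$; hence each such hyperplane is by definition a diagonal hyperplane containing the image of $f$, and we are done in this case.

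It remains to treat the case where the partition is trivial, i.e.\ all ratios $g_j/g_l$ are constant, and the main work is here. In this case the image of $f$ lies in a single fibre $L$ of the linear projection $\Phi=[F_1:\dots:F_k]$ (a rational map from $\projspace{n}$ to $\projspace{k-1}$, defined off $\bigcap H_j$), and $L$ is a linear subspace containing $\bigcap_{j=1}^k H_j$. If $\bigcap H_j\neq\emptyset$, then $L$ is a proper linear subspace through the intersection, so it lies in some codimension-one subspace through $\bigcap H_j$, that is, in an associated subspace, which then contains the image of $f$. The delicate subcase is $\bigcap H_j=\emptyset$: here the forms have no common zero, so $\Phi$ is an everywhere-defined injective linear map and its fibres are single points; thus $f$ is constant and its image is a point, which is exactly an associated subspace under the convention adopted in the degenerate case. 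The one place that needs genuine care, rather than bookkeeping, is checking that these two descriptions really exhaust the trivial-partition case and that the subspace extracted is always of the required type (diagonal or associated), including the degenerate reading of ``associated subspace'' as a point.
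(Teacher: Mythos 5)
Your overall strategy is the same as the paper's: lift $f$ to $\tilde f:\complex\to\complex^{n+1}\setminus\{0\}$ using triviality of line bundles over $\complex$, pull the forms back to nowhere vanishing entire functions, apply the Picard--Borel theorem, and split according to whether the resulting partition is trivial. Your lifting step and your nontrivial-partition case (where a proper part $J$, necessarily of size between $2$ and $k-2$, yields a diagonal hyperplane containing the image of $f$) are correct and match the paper. The problem is the trivial-partition case, which is the heart of the theorem, and there your argument has a genuine gap.

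You write that since $L$ (the fibre of $\Phi=[F_1:\cdots:F_k]$ containing the image of $f$) is a proper linear subspace containing $\bigcap H_j$, ``it lies in some codimension-one subspace through $\bigcap H_j$, that is, in an associated subspace.'' That implication is false as a general statement: an associated subspace has dimension exactly $\dim\bigl(\bigcap H_j\bigr)+1$, so if $\dim L\geq\dim\bigl(\bigcap H_j\bigr)+2$ then $L$ is contained in \emph{no} associated subspace at all (for instance, a plane through a point in $\projspace{3}$ lies in no line through that point). What you actually need---and this is the one nontrivial step of the whole proof---is the dimension bound $\dim L\leq\dim\bigl(\bigcap H_j\bigr)+1$, i.e.\ that the affine cone $B=\bigcap_{j=1}^k\{F_j=0\}$ has codimension at most $1$ in the cone $A$ over $L$. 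The paper proves this directly: on $A$ one has $F_j=b_jF_k$ for nonzero constants $b_j$ (the constant ratios furnished by Picard--Borel), so for $x,y\in A\setminus B$ with $\alpha=F_k(x)\neq0$ and $\beta=F_k(y)\neq0$, the combination $\beta x-\alpha y$ satisfies $F_k(\beta x-\alpha y)=0$, hence $F_j(\beta x-\alpha y)=0$ for all $j$, hence $\beta x-\alpha y\in B$. Equivalently, in your language: $A=T^{-1}(\complex\cdot a)$, where $T=(F_1,\ldots,F_k):\complex^{n+1}\to\complex^k$ is the linear map with kernel $B$ and $a$ represents the constant value of $\Phi$ on the image of $f$, and the preimage of a line under a linear map has dimension at most $\dim\ker T+1$. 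Either observation closes the gap in a line or two, after which $L$ is itself an associated subspace (it cannot equal $\bigcap H_j$, since the image of $f$ avoids all the $H_j$). Your degenerate subcase $\bigcap H_j=\emptyset$, where injectivity of $\Phi$ forces $f$ to be constant, is correct as written; but in the main subcase the sentence you offer in place of the dimension count is a non sequitur, so your proof there is incomplete.
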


\begin{remark}
In the case where $\bigcap H_j=\emptyset$,
the associated subspaces are points,
so the conclusion is that either $f$ is constant
or the image is contained in a diagonal hyperplane.
\end{remark}

\begin{proof}[Proof of Theorem~\ref{thm:subspaces}]
First we note that $f$ can be lifted
to a holomorphic map $\tilde f:\complex\to\complex^{n+1}\setminus\{0\}$.
To see this, observe that the quotient map
$\complex^{n+1}\setminus\{0\}\to\projspace{n}$
can be regarded as the universal line bundle over $\projspace{n}$
with the zero section removed.
Thus lifting $f$ is equivalent to finding
a nowhere vanishing section of the pullback by $f$
of the universal bundle.
But the vanishing of the cohomology group $H^1(\complex,\scriptO^*)$
guarantees that line bundles over $\complex$ are trivial,
and therefore a nowhere vanishing section always exists.

By reordering and rescaling the defining forms,
we can put a minimal linear relation
in the form
$$F_1+\cdots+F_k=0.$$

Define entire functions $h_1,\ldots,h_k$ by
$h_j=F_j\circ \tilde f$.
Then the $h_j$ satisfy the hypotheses of
the Picard--Borel theorem (Theorem~\ref{thm:picard-borel}):
each $h_j$ vanishes nowhere
(because the image of $f$ misses all the hyperplanes),
and the $h_j$ sum to the zero function
(because of the linear relation between the $F_j$).
Theorem~\ref{thm:picard-borel}
tells us that there is a subset
$J\subset\{1,\ldots,k\}$
with
$$\sum_{j\in J}h_j=0$$
and such that all the ratios $h_\mu/h_\nu$
are constant for $\mu,\nu\in J$.
There are two possibilities.

First, if $J$ is a proper subset of $\{1,\ldots,k\}$
then $J$ must have size
at least 2 and at most $k-2$.
(If $J$ either consisted of or omitted only a singleton $j$,
then the corresponding $h_j$ would be identically zero.)
In this case the linear form
$$F=\sum_{j\in J}F_j$$
defines a diagonal hyperplane in $\projspace{n}$.
(The minimality of the linear relation
implies that $F$ is nonzero.)
The image of $f$ lies in this hyperplane.

The second case is that $J=\{1,\ldots,k\}$.
Then there exist nonzero constants $c_1\ldots,c_{k-1}$
such that $$h_j=c_jh_k$$ for $j=1,\ldots,{k-1}$.
This means that the image of $\tilde f$
lies in each of the hyperplanes $F_j=c_jF_k$.
Let $A$ and $B$ be
the linear subspaces of $\complex^{n+1}$ given by
\begin{align*}
  A &= \bigcap_{j=1}^{k-1}\{F_j-c_jF_k=0\}, \\
  B &= \bigcap_{j=1}^k \{F_j=0\}.
\end{align*}
Clearly $B\subset A$.
It remains to show that $B$ has codimension at most 1 in $A$.
Equivalently, we wish to show that given $x,y\in A\setminus B$,
some nontrivial linear combination of $x$ and $y$ lies in $B$.
The numbers $\alpha=F_k(x)$
and $\beta=F_k(y)$ are both nonzero.
Then $F_k(\beta x-\alpha y)=0$,
so $F_j(\beta x-\alpha y)=0$ for all $j$,
hence $\beta x-\alpha y\in B$.
\end{proof}

\begin{remark}
In the above proof, the subspaces $A$ and $B$ can never be equal
(because the image of $f$ misses all of the $H_j$).
A naive dimension argument might suggest that $A$ and $B$ have the same dimension.
However, the fact that the $h_j$ sum to zero
implies the relation $c_1+\cdots+c_{k-1}=-1$,
so the forms $F_j-c_jF_k$ are linearly dependent:
their sum is zero.
\end{remark}

\begin{remark}
In the case $J=\{1,\ldots,k\}$,
Kobayashi states that $f$ is constant
(\cite[page~142, proof of Theorem~3.10.15, last paragraph]{Kobayashi-1998}).
In fact there exist nonconstant maps whose images lie in associated subspaces,
but this does not invalidate the conclusion
of Kobayashi's Theorem~3.10.15.
For an example of such a map,
take $n=4$ with linear forms
$$F_1=x_0-x_2,\quad F_2=x_2-x_1,\quad F_3=x_1-x_3,\quad F_4=x_3-x_0,\quad F_5=x_4$$
so that $F_1+F_2+F_3+F_4=0$.
Let $f:\complex\to X$ be a function that lifts to
$$\tilde f(t)=(t,t+1,t+2,t+3,1).$$
Then $h_1,\ldots,h_4$ are the constant functions $-2,1,-2,3$
respectively, but $f$ is not constant.
\end{remark}

\begin{corollary}
Let $H_1,\ldots,H_N$ be distinct hyperplanes
in $\projspace{n}$,  not in general position.
Let $p$ be any point of $X=\projspace{n}\setminus\bigcup H_j$.
Then there is a finite collection of proper subspaces
of $T_p X$ with the property that for every map $f:\complex\to X$
with $f(0)=p$,
the derivative $df(0)$ lies in one of those subspaces.
\end{corollary}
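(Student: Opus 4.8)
The plan is to deduce this directly from Theorem~\ref{thm:subspaces}, whose conclusion already confines the image of any entire curve $f$ to one of a controlled family of proper subspaces. First I would use the failure of general position to produce a minimal linear relation: since the hyperplanes are not in general position, some subset of $\{F_1,\ldots,F_N\}$ is linearly dependent, and refining to a minimal dependent subset yields a relation $\sum_{j\in S}c_jF_j=0$ with every $c_j\neq0$. Because the $H_j$ are distinct, no two forms are proportional, so this relation involves $k=|S|\geq3$ forms, and Theorem~\ref{thm:subspaces} applies to it.

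Next I would sort out the two types of subspaces the theorem produces. The diagonal hyperplanes of the relation form a fixed finite set, independent of $f$. The associated subspaces, by contrast, form an infinite family, and this is where the main work lies: a priori the theorem allows the image of $f$ to sit inside any one of infinitely many associated subspaces. The observation that tames this is the Remark that a point $p\notin\bigcap H_j$ lies on exactly one associated subspace of the relation. Since $f(0)=p$ forces $p$ into the image of $f$, any subspace containing that image must pass through $p$; hence the only candidates are the finitely many diagonal hyperplanes through $p$ together with the single associated subspace through $p$. All of these are proper subspaces of $\projspace{n}$: the diagonal hyperplanes by definition, and each associated subspace because the $k\geq3$ distinct forms have rank at least $2$, forcing its codimension to be positive.

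Finally I would pass to tangent spaces. If the image of $f$ lies in a proper subspace $V\subset\projspace{n}$ with $p\in V$, then $f$ is a holomorphic map into $V$, so $df(0)$ takes values in $T_pV$, a proper subspace of $T_pX=T_p\projspace{n}$. Taking the tangent spaces at $p$ of the finitely many subspaces identified above yields the required finite collection. The only genuinely delicate point is the reduction from infinitely many associated subspaces to a single one; once the Remark supplies that, the remainder is bookkeeping about dimensions and the elementary fact that a curve lying in $V$ has derivative tangent to $V$.
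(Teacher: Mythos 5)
Your proof is correct and follows essentially the same route as the paper's: apply Theorem~\ref{thm:subspaces}, note that the diagonal hyperplanes form a fixed finite set, and use the Remark that $p$ lies in exactly one associated subspace per minimal linear relation to cut the infinite family down to finitely many candidates, then pass to tangent spaces. The paper's proof is just a terser version of the same argument, leaving the properness and tangent-space bookkeeping implicit.
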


\begin{proof}
By Theorem~\ref{thm:subspaces},
the image of $f$ is restricted to a proper linear subspace of $X$.
Thus there is a corresponding subspace of $T_p X$
containing $df(0)$.
We just need to verify that there are only finitely many possible subspaces.
But each point of $X$ is contained in exactly one
associated subspace for each minimal linear relation among the $F_j$,
and there are only finitely many diagonal hyperplanes.
\end{proof}

\begin{corollary} \label{cor:notdom}
If the distinct hyperplanes $H_1,\ldots,H_N$ in $\projspace{n}$
are not in general position,
then $\projspace{n}\setminus\bigcup H_j$ is not dominable by $\complex^n$.
\end{corollary}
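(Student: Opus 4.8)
The plan is to argue by contradiction, letting the preceding corollary do all the analytic work. That corollary confines the derivatives of all entire curves through a fixed point $p\in X$ to a finite union of proper subspaces of $T_pX$. The key geometric observation is that any holomorphic map $\Phi\colon\complex^n\to X$ restricts to an entire curve along each complex line in its domain, so the derivatives $d\Phi_{z_0}(v)$ inherit this constraint.

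Concretely, I would suppose that some holomorphic $\Phi\colon\complex^n\to X$ has rank $n$ at a point $z_0\in\complex^n$, and set $p=\Phi(z_0)$. For each $v\in\complex^n$, the map $f_v\colon\complex\to X$ defined by $f_v(t)=\Phi(z_0+tv)$ is an entire curve with $f_v(0)=p$ and $df_v(0)=d\Phi_{z_0}(v)$. By the preceding corollary there are finitely many proper subspaces $V_1,\dots,V_m\subsetneq T_pX$, depending only on $p$, such that every $df_v(0)$ lies in some $V_i$. Hence the image of the linear map $d\Phi_{z_0}\colon\complex^n\to T_pX$ is contained in $V_1\cup\dots\cup V_m$. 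Since $\Phi$ has rank $n$ at $z_0$ and $\dim T_pX=n$, the differential $d\Phi_{z_0}$ is surjective, so its image is all of $T_pX$, forcing $T_pX=V_1\cup\dots\cup V_m$.

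The last step is to invoke the elementary fact that a vector space over an infinite field (here $\complex$) cannot be a finite union of proper subspaces, which contradicts the displayed equality. Thus no such $\Phi$ can attain rank $n$ anywhere, and $X$ fails to be dominable by $\complex^n$. I do not expect any genuine obstacle in this argument, since the analytic substance has already been absorbed into the preceding corollary (and through it, into the Picard--Borel theorem). The only point meriting care is the bookkeeping when applying the corollary: one must note that the finite family $V_1,\dots,V_m$ is attached to the single base point $p$ and is independent of the direction $v$, so that the union over all $v$ of the one-dimensional images $\complex\cdot d\Phi_{z_0}(v)$ stays inside one fixed finite collection. Once this is observed, the linear-algebra fact closes the proof immediately.
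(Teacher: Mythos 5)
Your proof is correct and follows essentially the same route as the paper's: restrict $\Phi$ to complex lines through the base point, apply the preceding corollary to confine all directional derivatives to a fixed finite union of proper subspaces of $T_pX$, and contradict surjectivity of the differential via the fact that a complex vector space is never a finite union of proper subspaces. The only difference is cosmetic: you work at a general point $z_0$ and state the covering argument explicitly, where the paper normalises the point to the origin and leaves that linear-algebra fact implicit.
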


\begin{proof}
Let $f$ be a map from $\complex^n$ into $\projspace{n}\setminus\bigcup H_j$,
with $f(0,\ldots,0)=p$.
The image of $df(0)$ is spanned by the $n$ vectors
$$d(t\mapsto f(t e_j))|_{t=0}\quad(j=1,\ldots,n)$$
where $\{e_1,\ldots,e_n\}$ is a basis for $\complex^n$.
If $df(0)$ is surjective,
those vectors are linearly independent,
so there will be no finite set of proper subspaces
containing
$$d(t\mapsto f(tv))|_{t=0}$$
for all $v\in\complex^n$, contradicting the previous corollary.
\end{proof}

\begin{proof}[Proof of Theorem~\ref{thm:hyperplanes}]
Write $X$ for the space $\projspace{n}\setminus\bigcup_{j=1}^N H_j$.

\textit{Case 1: hyperplanes in general position and $N>n+1$.}
In this case, Kobayashi's Theorem~3.6.10 \cite[page~136]{Kobayashi-1998}
tells us that the image of a nonconstant holomorphic map $\complex\to X$
must lie in one of a finite collection of hyperplanes.
Therefore $X$ is not dominable by $\complex^n$.
Also, $X$ is not $\complex$-connected: distinct points
outside the finite collection of hyperplanes
cannot be joined by an entire curve.

\textit{Case 2: hyperplanes in general position and $N\leq n+1$.}
If $N=0$, then $X=\projspace{n}$ is Oka.
For $N>0$, the fact that the hyperplanes are in general position
means that we can choose coordinates
so that $H_j$ is the hyperplane $x_{j-1}=0$ for $j=1,\ldots,N$.
Then we see that
$X\cong\cstar\times\cdots\times\cstar\times\complex\times\cdots\times\complex$
with $N-1$ factors $\cstar$ and $n+1-N$ factors $\complex$.
This is a product of Oka manifolds, hence Oka.

\textit{Case 3: hyperplanes not in general position.}
The fact that $X$ is not dominable, and therefore not Oka,
is just Corollary~\ref{cor:notdom} above.
To see that $X$ is not $\complex$-connected,
choose any point $p\in X$.
Then Theorem~\ref{thm:subspaces} gives a finite
collection of hyperplanes containing every entire curve through $p$.
If $q$ is a point of $X$ outside that finite collection,
then $p$ and $q$ cannot be joined by an entire curve.
\end{proof}

\begin{remark}
In fact if $X$ is not Oka,
then it does not satisfy the weaker version of $\complex$-connectedness
referred to above: there exist pairs of points that cannot
be connected even by a finite chain of entire curves.
In case 1 of the above proof this is immediate.
For case 3, some further work is needed, since
the finite collection of hyperplanes referred to
can vary with the choice of the point $p$.
The key ingredients are the fact that there are only
finitely many diagonal hyperplanes in total,
and that given an associated subspace $A$
and a diagonal hyperplane $D$ of the same configuration,
either $A\subset D$ or $A\cap D\subset\bigcup H_j$.
In other words, points inside a diagonal hyperplane
cannot be joined to points outside via associated subspaces.
\end{remark}

\section{Complements of graphs of meromorphic functions} \label{section:graph}

Buzzard and Lu \cite[Proposition~5.1]{Buzzard-Lu-2000}
showed that the complement in $\projspace{2}$
of a smooth cubic curve is dominable by $\complex^2$.
Their method of proof was to construct
a meromorphic function associated with the cubic,
and a branched covering map
from the complement of the graph of that function to the complement of the cubic,
and then show that the graph complement is dominable.
We will show that the graph complement is in fact Oka;
this result can be generalised to meromorphic functions on Oka manifolds
other than $\complex$, subject to an additional hypothesis.
(Note that our result is not enough to settle the question
of whether the cubic complement is Oka.
We know that the Oka property passes down through \emph{unbranched} covering maps,
but no similar result is known for branched coverings.)

For a holomorphic map $m:X\to\projspace{1}$ on a complex manifold $X$,
that is to say either a
meromorphic function with no indeterminacy
or else the constant function $\infty$,
we will write $\Gamma_m$ for the affine graph
$$\Gamma_m=\{(x,m(x))\in X\times\complex:m(x)\neq\infty\}.$$
This is a closed subset of $X\times\complex$,
so the set $(X\times\complex)\setminus\Gamma_m$ is a complex manifold.
(If $m$ is identically $\infty$, then $\Gamma_m$ is the empty set.)

Buzzard and Lu's result relies on the fact that meromorphic functions on $\complex$
can be written in the following form.

\begin{lemma} \label{lemma:BL}
For every meromorphic function $m:\complex\to\projspace{1}$
there exist holomorphic functions $f,g:\complex\to\complex$ such that
$$m=f+\frac{1}{g}.$$
\end{lemma}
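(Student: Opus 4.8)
The plan is to turn the desired identity $m=f+1/g$ into a condition on $g$ alone. Since $f$ is to be holomorphic, it contributes no poles, so $1/g$ must reproduce the entire principal part of $m$ at each of its poles; conversely, if I can find a holomorphic $g$ whose reciprocal has exactly the same principal parts as $m$, then $f:=m-1/g$ is automatically holomorphic and we are done. (If $m$ has no poles the statement is trivial: take $g\equiv1$ and $f=m-1$.) So assume $m\not\equiv\infty$ and let $\{p_j\}$ be its discrete pole set, with $p_j$ a pole of order $k_j$ and principal part $\sum_{\ell=1}^{k_j}a^{(j)}_\ell(z-p_j)^{-\ell}$, where $a^{(j)}_{k_j}\neq0$.

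I want a holomorphic $g$ with two properties: (i) $g$ vanishes to order exactly $k_j$ at each $p_j$ and has no other zeros, so that $1/g$ has poles exactly where $m$ does and of the same orders; and (ii) the jet of $g$ at $p_j$ is arranged so that the full principal part of $1/g$ there agrees with that of $m$. Given (i) and (ii), $m-1/g$ has a removable singularity at every $p_j$ and no other poles, hence is entire, and this is the required $f$.

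To construct $g$ I would proceed in two steps. First, by the Weierstrass product theorem, choose an entire $g_0$ vanishing to order exactly $k_j$ at each $p_j$ and nowhere else. Then $1/g_0$ already has poles of the right orders at the right points, but generally the wrong principal parts. I correct this by multiplying by a nowhere-vanishing factor, writing $g=g_0 e^{\psi}$; this preserves (i), since $e^{\psi}$ has no zeros. A short local computation shows that the principal part of $1/g=e^{-\psi}/g_0$ at $p_j$ is determined by the jet of $e^{-\psi}$ at $p_j$ up to order $k_j-1$, and that matching it to the prescribed principal part of $m$ is a recursively solvable (triangular) system for that jet; its top equation forces the constant term $e^{-\psi}(p_j)$ to equal a specific nonzero number, namely $a^{(j)}_{k_j}$ times the first nonzero Taylor coefficient of $g_0$ at $p_j$.

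The main obstacle, and the reason for the two-step construction, is that one cannot simply interpolate $g$ directly: an entire function with prescribed jets at the $p_j$ would in general acquire \emph{extra} zeros, each of which would produce an unwanted pole of $1/g$ and hence of $f$. The exponential factor $e^{\psi}$ is precisely what lets me adjust principal parts without creating new zeros. It then remains only to produce a nowhere-vanishing $e^{\psi}$ with the jets computed above. Because each required value $e^{-\psi}(p_j)$ is nonzero, a local branch of the logarithm converts the prescribed jet of $e^{-\psi}$ into a prescribed finite jet for $-\psi$ at each $p_j$; the classical interpolation theorem on $\complex$ (a standard consequence of the Mittag--Leffler and Weierstrass theorems) then furnishes an entire $\psi$ realizing all of these jets simultaneously. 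With this $\psi$, the function $g=g_0 e^{\psi}$ satisfies (i) and (ii), and $f=m-1/g$ is the required holomorphic function.
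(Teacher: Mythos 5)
Your proof is correct and takes essentially the same route as the paper: the paper's proof of this lemma is a citation of the Mittag--Leffler/Weierstrass argument of Buzzard and Lu, which is precisely your construction (a Weierstrass function $g_0$ vanishing at the poles of $m$ to the correct orders, corrected by a zero-free factor $e^{\psi}$ whose jets at those points are prescribed by interpolation so that the principal parts of $1/g$ match those of $m$). The same structure reappears in the paper's own generalization, Lemma~\ref{lemma:mero_sum}, where $g=k e^{-\mu}$ with $k$ playing the role of your $g_0$ and $\mu=-\psi$ obtained via Cartan's Theorem~B in place of classical interpolation.
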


In other words, the projection map from $\complex^2\setminus\Gamma_m$
onto the first coordinate has a holomorphic section
given by $x\mapsto(x,f(x))$.

The result follows from the classical theorems of Mittag-Leffler
and Weierstrass;
see \cite[page~645]{Buzzard-Lu-2000} for details.

The analogous result in higher dimensions is not true.
We have the following topological criterion.

\begin{lemma} \label{lemma:mero_sum}
Let $m$ be a holomorphic map from $\complex^n$ to $\projspace{1}$,
and write $m=h/k$ for holomorphic functions
$h,k:\complex^n\to\complex$ with no common zeros.
Then the following statements are equivalent.
\begin{enumerate}
    \item There exist holomorphic functions $f,g:\complex^n\to\complex$
    such that $$m=f+\frac{1}{g}.$$
    \item The function $h$ has a logarithm on the zero set $Z(k)$ of $k$.
    \item The function $h$ has a logarithm on a neighbourhood of $Z(k)$.
\end{enumerate}
\end{lemma}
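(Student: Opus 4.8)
The plan is to reduce statement~(1) to a nowhere-vanishing condition and then to read the existence of the required logarithm as the vanishing of a topological obstruction. I would prove the cycle
$$(1)\Longleftrightarrow(1')\Longrightarrow(2)\Longrightarrow(3)\Longrightarrow(1'),$$
where $(1')$ is the auxiliary statement that there is an entire function $f:\complex^n\to\complex$ for which $h-fk$ is nowhere vanishing. The equivalence $(1)\Leftrightarrow(1')$ is elementary: if $m=f+1/g$, then clearing denominators in $h/k=(fg+1)/g$ gives the identity of entire functions $g\,(h-fk)=k$, and if $h-fk$ vanished at a point then $k$ would vanish there too, forcing $h$ to vanish there as well and contradicting the hypothesis that $h$ and $k$ have no common zeros; conversely, if $h-fk$ is nowhere zero then $g:=k/(h-fk)$ is entire and $f+1/g=m$.

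For $(1')\Rightarrow(2)$ I would use that a nowhere-vanishing holomorphic function on the simply connected manifold $\complex^n$ admits a global holomorphic logarithm, so $h-fk=e^{w}$ for some entire $w$; restricting to $Z(k)$, where $h-fk$ reduces to $h$, exhibits $w|_{Z(k)}$ as a logarithm of $h$ on $Z(k)$. For $(2)\Rightarrow(3)$, I would first extend the given logarithm $\ell_0$ of $h$ on the analytic set $Z(k)$ to an entire function $L$ by Cartan's extension theorem. Then $q:=h\,e^{-L}$ is entire and identically $1$ on $Z(k)$, so $W:=\{\,|q-1|<1\,\}$ is an open neighbourhood of $Z(k)$ on which the principal branch $\lambda:=\operatorname{Log} q$ is holomorphic; now $L+\lambda$ is a holomorphic logarithm of $h$ on $W$.

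The substantial step is $(3)\Rightarrow(1')$. Given a logarithm $\ell$ of $h$ on an open neighbourhood $U$ of $Z(k)$, I would look for an entire $w$ such that $h-e^{w}$ is divisible by $k$; then $f:=(h-e^{w})/k$ is entire and $h-fk=e^{w}$ is nowhere zero, giving $(1')$. To produce $w$, consider the short exact sequence $0\to\scriptO\xrightarrow{\cdot k}\scriptO\to\scriptO/k\scriptO\to0$ on $\complex^n$; since $\complex^n$ is Stein we have $H^1(\complex^n,\scriptO)=0$, so the map $\scriptO(\complex^n)\to H^0(\complex^n,\scriptO/k\scriptO)=\scriptO(\complex^n)/(k)$ is surjective. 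As $\scriptO/k\scriptO$ is supported on $Z(k)\subset U$, the function $\ell$ determines a class in $\scriptO(\complex^n)/(k)$, and lifting it gives an entire $w$ with $w-\ell$ divisible by $k$ near $Z(k)$, say $w-\ell=kr$. Then near $Z(k)$ one has $h-e^{w}=e^{\ell}\bigl(1-e^{kr}\bigr)$, which is $k$ times a holomorphic function, while $k$ is invertible away from $Z(k)$; hence $(h-e^{w})/k$ is entire, as needed.

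I expect $(3)\Rightarrow(1')$ to be the main obstacle, and within it the passage from ``$h-e^{w}$ vanishes on $Z(k)$'' to ``$h-e^{w}$ is divisible by $k$''. These coincide only when $Z(k)$ is reduced; in general the divisibility requires matching $\ell$ to higher infinitesimal order along $Z(k)$, which is exactly the information encoded by a logarithm on a full neighbourhood rather than on the set alone. This is the reason for including the intermediate statement~(3) and for routing the reduced-set hypothesis~(2) through the topological extension argument of the previous paragraph.
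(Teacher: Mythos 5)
Your proof is correct, and while your implications $(1)\Leftrightarrow(1')\Rightarrow(2)$ and $(3)\Rightarrow(1')$ follow essentially the same lines as the paper (the same short exact sequence $0\to k\scriptO\to\scriptO\to\scriptO/k\scriptO\to0$, Cartan's Theorem~B, and the choice $f=(h-e^{w})/k$, $g=k/e^{w}$), your proof of $(2)\Rightarrow(3)$ is genuinely different. The paper's argument there is topological: it triangulates $\complex^n$ with $Z(k)$ as a subcomplex (via Lojasiewicz's theorem on triangulation of semi-analytic sets), uses the CW-complex fact that $Z(k)$ has a neighbourhood basis of sets of which it is a deformation retract, and then invokes the lifting criterion for the covering map $\exp:\complex\to\cstar$ to produce the logarithm on a neighbourhood. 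You instead extend the logarithm $\ell_0$ from the analytic set $Z(k)$ to an entire function $L$ by Cartan's extension theorem, observe that $q=he^{-L}$ is entire and equal to $1$ on $Z(k)$, and take the explicit neighbourhood $W=\{|q-1|<1\}$ on which $\operatorname{Log}q$ is defined. This is cleaner and stays entirely within Stein theory, avoiding the real-analytic triangulation machinery; the trade-off is that your route genuinely needs $\ell_0$ to be holomorphic on $Z(k)$ (to apply Cartan extension), whereas the paper's topological argument would go through with merely a continuous logarithm, so the paper implicitly proves a marginally stronger implication. Since the implication $(1)\Rightarrow(2)$ produces a holomorphic logarithm, your cycle of implications closes correctly and the lemma as stated is fully proved. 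A further merit of your write-up: the computation $h-e^{w}=e^{\ell}\bigl(1-e^{kr}\bigr)$ with $1-e^{kr}$ divisible by $k$ makes explicit the divisibility claim that the paper only asserts (namely that $\mu\equiv\lambda'\bmod k\scriptO$ forces $(h-e^{\mu})\geq(k)$), and your closing remark correctly identifies why statement $(3)$, rather than $(2)$ alone, carries the higher-order information along $Z(k)$ needed when $k$ has multiple zeros.
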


\begin{proof}
$(1\Rightarrow 2)$:
The function
$$\frac{1}{g}=m-f=\frac{h-kf}{k}$$
has no zeros.
Therefore $h-kf$ is a nowhere vanishing entire function, so
$$h-kf=e^\mu$$
for some holomorphic $\mu:\complex^n\to\complex$.
Then $\mu|_{Z(k)}$ is the desired logarithm of $h$.

\bigskip
$(2\Rightarrow 3)$:
Suppose there is a holomorphic function $\lambda$
on $Z(k)$ such that $e^\lambda=h|_{Z(k)}$.
We wish to find a neighbourhood $U$ of $Z(k)$,
small enough that $h$ vanishes nowhere on $U$,
such that the inclusion map $Z(k)\hookrightarrow U$
induces an epimorphism of fundamental groups.
Given such a neighbourhood, we have the following situation.
$$ \xymatrix@R=15mm@C=20mm{
    & & \complex \ar[d]^{\text{exp}} \\
    Z(k) \ar@{^{(}->}[r] \ar@<1mm>[urr]^\lambda
    & U \ar[r]^h \ar@{.>}[ur]_{\lambda'} & ~ \cstar
} $$
Then the existence of $\lambda$, together with the epimorphism
$\pi_1(Z(k))\to\pi_1(U)$,
tells us that $h|_U$ satisfies the lifting criterion
for the covering map $\exp:\cstar\to\complex$.
Therefore there exists a holomorphic function $\lambda'$
such that $e^{\lambda'}=h$ on $U$.

To find a suitable neighbourhood $U$,
we start by realising $\complex^n$ as a simplicial complex
with $Z(k)$ as a subcomplex.
(The existence of such a simplicial complex is guaranteed
by standard results on the topology of subanalytic varieties:
see for example \cite[Theorem~1]{Lojasiewicz-1964}.)
Then we can find a basis of neighbourhoods of $Z(k)$
such that $Z(k)$ is a strong deformation retract of each basis set
(this is a general fact about CW-complexes:
see \cite[Prop.~A.5, p.~523]{Hatcher-2002}).
Finally, we choose a basis set $U$ small enough
that $h$ does not vanish on $U$.

\bigskip
$(3\Rightarrow 1)$:
First consider the situation of hypothesis~(2):
suppose $\lambda:Z(k)\to\complex$ is a logarithm for $h$.
We wish to find a suitable
holomorphic function $\mu:\complex^n\to\complex$
which extends $\lambda$.
Then we can define
$$f=\frac{h-e^\mu}{k}\text{ and }g=\frac{k}{e^\mu}.$$
In order for such $f$ to be a well defined holomorphic function,
we require that $h-e^\mu$ should vanish on $Z(k)$ to order
at least the order of vanishing of $k$,
in other words that the divisors should satisfy
$$(h-e^\mu)\geq(k).$$

By hypothesis, we in fact have a neighbourhood $U$ of $Z(k)$
and a logarithm $\lambda':U\to\complex$
for $h$ on $U$.
Then the above condition is equivalent to requiring that
$\mu$ agrees with $\lambda'$ on $Z(k)$
up to the order of vanishing of $k$.

We write $\scriptO$ for the sheaf
of germs of holomorphic functions on $\complex^n$.
Consider the exact sequence of sheaves
$$0\to k\scriptO\to\scriptO\to\scriptO/k\scriptO\to 0.$$
The sheaf $k\scriptO$ is coherent, so by
Cartan's Theorem~B,
$H^1(\complex^n,k\scriptO)=0$,
and therefore the map
$\scriptO(\complex^n)\to(\scriptO/k\scriptO)(\complex^n)$
is surjective.
Noting that the stalk of $\scriptO/k\scriptO$ at any point
outside $Z(k)$ is zero,
we see that the function $\lambda'$
represents an element of $(\scriptO/k\scriptO)(\complex^n)$.
Then we can choose $\mu$ to be any preimage of that element.
\end{proof}

\begin{remark}
When $n=1$ in the above lemma,
the zero set of $k$ is discrete,
and so a logarithm of $h$ always exists on $Z(k)$.
Thus Lemma~\ref{lemma:BL} is a special case
of Lemma~\ref{lemma:mero_sum}.
\end{remark}

\begin{remark}
The only properties of $\complex^n$ used in the above proof
are that it is Stein and simply connected
and that all meromorphic functions on $\complex^n$ can be written as a quotient.
Thus we can generalise the result:
if $X$ is a simply connected Stein manifold, $h,k\in\scriptO(X)$
have no common zeros and $m=h/k$,
then the three statements given in the lemma are equivalent.
\end{remark}

\begin{example} \label{ex:not_standard_form}
For positive integers $\nu$, the functions $m_\nu:\complex^2\to\projspace{1}$
given by
$$m_\nu(x,y)=\frac{x}{x y^\nu -1}$$
cannot be written in the form $f+1/g$.
At present it is not known whether any of the spaces
$\complex^3\setminus\Gamma_{m_\nu}$ for $\nu\geq2$ are Oka.
In the case $\nu=1$, the Oka property for $\complex^3\setminus\Gamma_{m_1}$
follows from work of Ivarsson and
Kutzschebauch~\cite[Lemmas~5.2 and~5.3]{Ivarsson-Kutzschebauch-2012}.
Specifically, let $p$ be the polynomial
$p(x,y,z)=xyz-x-z$.
Then $\Gamma_{m_1}$ is the level set $p^{-1}(0)$,
and the complement is the union of all the other level sets.
The complement is isomorphic to the product of $\cstar$
with the level set $p^{-1}(1)$ via the map
$\cstar\times p^{-1}(1)\to\complex^3\setminus\Gamma_{m_1}$ given by
$$(\lambda,x,y,z)\mapsto(\lambda x, \lambda^{-1} y, \lambda z).$$
Now $p^{-1}(1)$ is smooth,
and by the results of Ivarsson and~Kutzschebauch,
its tangent bundle is spanned by
finitely many complete holomorphic vector fields.
This implies that the set is Oka
(see for example \cite[Example~5.5.13(B)]{Forstneric-2011}),
so $\cstar\times p^{-1}(1)$ is Oka.
\end{example}

With these considerations in mind,
we are ready to state the main result of this section.

\begin{theorem} \label{thm:mero_general_base}
Let $X$ be a complex manifold, and
let $m:X\to\projspace{1}$
be a holomorphic map.
Suppose $m$ can be written in the form
$m=f+1/g$ for holomorphic functions $f$ and~$g$.
Then $\littlecomp$ is Oka if and only if $X$ is Oka.
\end{theorem}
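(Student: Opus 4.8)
The plan is to establish the two implications separately, and to begin by normalising the graph. The shear $(x,w)\mapsto(x,w-f(x))$ is an automorphism of $X\times\complex$ that carries $\Gamma_m$ to $\Gamma_{1/g}$, so after applying it we may assume $f=0$ and $m=1/g$; then $\littlecomp$ becomes $Y=\{(x,w)\in X\times\complex:w\,g(x)\neq1\}$, and the forbidden value $1/g(x)$ runs off to infinity precisely over the zero set $Z(g)$ of $g$. I would record at once that the first projection $\pi\colon Y\to X$ is a surjective submersion but \emph{not} a fibre bundle: its fibre is $\cstar$ over $X\setminus Z(g)$ and jumps to $\complex$ over $Z(g)$, so the fibre-bundle theorem recalled in Section~\ref{section:Oka} does not apply directly.

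The ``only if'' implication is easy. The decomposition furnishes a global holomorphic section of $\pi$, namely $\sigma(x)=(x,0)$ (indeed $0\cdot g(x)=0\neq1$ for all $x$). Since $\pi\circ\sigma=\mathrm{id}_X$, the manifold $X$ is a holomorphic retract of $Y$, and a retract of an Oka manifold is Oka: given a holomorphic map from a convex compact $K$ into $X$, compose with $\sigma$, approximate in $Y$ using the Oka property, and project back by $\pi$. Hence $Y$ Oka implies $X$ Oka.

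For the ``if'' implication I would verify CAP (Definition~\ref{def:oka}) by hand. Given $\phi=(\phi_X,\phi_w)\colon K\to Y$ on a convex compact $K\subset\complex^k$, first globalise the base coordinate: as $X$ is Oka, approximate $\phi_X$ by an entire $\Phi_X\colon\complex^k\to X$, close enough on $K$ that $(\Phi_X,\phi_w)$ still lands in $Y$ (the function $\phi_w\cdot(g\circ\phi_X)-1$ is bounded away from $0$ on the compact set $K$). Writing $G=g\circ\Phi_X\in\scriptO(\complex^k)$, the whole problem collapses to a fibre-extension statement over $\complex^k$: produce an entire $\Phi_w$ with $\Phi_w\,G\neq1$ everywhere and $\Phi_w\approx\phi_w$ on $K$. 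Equivalently, setting $P=1-\Phi_w G$, produce a nowhere-vanishing entire $P$ with $P\equiv1\pmod{G}$ (so that $\Phi_w=(1-P)/G$ is entire) and $P\approx1-\phi_w G$ on $K$; here nonvanishing of $P$ is exactly the constraint defining $Y$.

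The main obstacle is precisely this last step, and it is where the zeros of $g$ bite. On the convex set $K$ the nonvanishing function $1-\phi_w G$ has a holomorphic logarithm $\beta$, and near each point of $Z(G)\cap K$ the branch of $\beta$ vanishing there does so to the order of $G$; the tempting ansatz $P=e^{G\tau}$ with $\tau$ entire then settles everything by a Runge approximation of $\beta/G$ on $K$ --- \emph{provided} $\beta$ can be chosen to vanish on all of $Z(G)\cap K$. When $Z(G)\cap K$ is disconnected the branches may differ by multiples of $2\pi i$ on different components, and this integer monodromy is the genuine difficulty: global nonvanishing of $P$ cannot be read off from the local data on $K$. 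I would resolve it exactly as in the proof of Lemma~\ref{lemma:mero_sum}, using the exact sequence $0\to G\scriptO\to\scriptO\to\scriptO/G\scriptO\to0$ and Cartan's Theorem~B to realise the prescribed data on $Z(G)$ by a global entire function $h$ with $e^{h}\equiv1\pmod{G}$ agreeing with $\beta$ modulo $G$ near $K$; a final correction $h\mapsto h+G\tau$, with $\tau$ an entire function approximating the holomorphic function $(\beta-h)/G$ on $K$ (Oka--Weil on the convex set $K$), secures the uniform approximation without disturbing either the divisibility or the nonvanishing. Combining the resulting $\Phi_w$ with $\Phi_X$ yields the required global map $\complex^k\to Y$, so $Y$ is Oka.
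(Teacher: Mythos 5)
Your ``only if'' direction and the normalisation $f=0$ are fine, but the ``if'' direction has a genuine gap at exactly the step you flag as the main obstacle, and the proposed repair does not work. The problem is that the fibre-extension problem over a \emph{fixed} base map $\Phi_X$ is genuinely obstructed, and no appeal to Cartan's Theorem~B in the style of Lemma~\ref{lemma:mero_sum} can remove the obstruction, because the global data you want to realise need not exist. Indeed, any nowhere-vanishing entire $P$ on $\complex^k$ is $e^h$ with $h$ entire, and $P\equiv1\pmod G$ forces $e^h=1$ on $Z(G)$, hence $h$ is a constant in $2\pi i\integers$ on each \emph{connected component} of $Z(G)$. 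If some single connected component of $Z(G)$ meets $K$ in two pieces on which your logarithm $\beta$ of $1-\phi_w G$ takes different values in $2\pi i\integers$, then no such $h$ can approximate $\beta$: if $|e^{h-\beta}-1|<1/2$ on $K$, then $h-\beta$ maps the connected set $K$ into a single sheet of $\exp^{-1}(D(1,1/2))$, which contains exactly one point of $2\pi i\integers$, while the two pieces force it to contain two distinct lattice points. This really happens: take $X=\complex^2$ (Oka), $g(x_1,x_2)=x_1^2+x_2^2-1$, so $Z(g)$ is a connected smooth quadric, and $K=\{|x_1|\le 2,\ |x_2|\le 1/2\}$, which meets $Z(g)$ in the two disjoint branches $x_1=\pm\sqrt{1-x_2^2}$. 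Put $\beta=\pi i\bigl(1-x_1(1-x_2^2)^{-1/2}\bigr)$ and $\phi_w=(1-e^\beta)/g$, which is holomorphic near $K$; then $(\mathrm{id},\phi_w)$ maps a neighbourhood of $K$ into the graph complement, $\beta$ vanishes on one branch and equals $2\pi i$ on the other, and the estimate above shows that \emph{every} entire $\Phi_w$ with $1-\Phi_w g$ nowhere zero satisfies $\sup_K|\Phi_w-\phi_w|\ge \min_K|e^\beta|\big/\bigl(2\sup_K|g|\bigr)>0$. Note that here $\phi_X$ is already entire, so your scheme legitimately takes $\Phi_X=\mathrm{id}$ and $G=g$, after which the remaining step is provably impossible. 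This is not a technicality: it is the geometric reason why the projection $\littlecomp\to X$ fails to be an Oka map (Remark~\ref{rem:section}), and it dooms any strategy that pins down the base coordinate of the approximating map.

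The paper's proof is structured precisely to avoid this. For $X=\complex^n$ (Proposition~\ref{prop:Cn}) it makes no attempt at fibrewise extension; instead it passes to a covering space of the complement and shows that space is weakly subelliptic, using sprays of the form $x\mapsto x+g(x)^2s$ (controlled by Lemma~\ref{lemma:localise}) which \emph{move the base variable}. For general $X$ it uses the convex \emph{interpolation} property rather than approximation: extend $\pi_1\circ\phi$ to $\psi:\complex^n\to X$, transfer the problem to $\bigcomp$ (Oka by the $\complex^n$ case), extend there --- crucially, that extension is free to move the $\complex^n$-coordinate --- and then push forward to $\littlecomp$ using $\psi\times\mathrm{id}_\complex$. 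The freedom to perturb the base coordinate is exactly what your outline gives up, and in the example above it is indispensable.
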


\begin{remark} \label{rem:section}
The existence of the decomposition $m=f+1/g$ is a geometric
condition that is of some independent interest:
it is equivalent to the condition that the projection
map from $\littlecomp$ onto the first factor has a holomorphic section.
The projection map is an elliptic submersion
in the sense defined in \cite[page 24]{Forstneric-Larusson-2011}.
(It is easy to see that it is a \emph{stratified} elliptic submersion,
as defined in \cite[page 25]{Forstneric-Larusson-2011}.
For a sketch of why it is an (unstratified) elliptic submersion,
see Remark~\ref{rem:elliptic} below.)
However, unless either $m$ has no poles or $m=\infty$,
the projection is not an Oka map,
because it is not a topological fibration.

In the case where $X$ is Stein, it follows from
\cite[Theorem~5.4~(iii)]{Forstneric-Larusson-2011}
that the existence of a continuous section of the elliptic submersion
implies the existence of a holomorphic section.
For general $X$, one might expect that this ellipticity property
could be applied to yield a simpler proof than the one presented below.
So far, such a proof has been elusive.
\end{remark}

We will first prove Theorem~\ref{thm:mero_general_base}
for the special case $X=\complex^n$,
and then show how the convex interpolation property (Definition~\ref{def:cip})
for general $X$ reduces to the special case.
The proof for $X=\complex^n$ involves a variation
of Gromov's technique of localisation of algebraic subellipticity
(see \cite[Lemma~3.5B]{Gromov-1989}
and \cite[Proposition~6.4.2]{Forstneric-2011}).
This relies on the following lemma.

\begin{lemma} \label{lemma:localise}
Let $g:\complex^n\to\complex$ be a holomorphic function,
not identically zero,
and suppose $x_0\in\complex^n$ satisfies $g(x_0)=0$.
Then for all $s\in\complex^n$,
$$\lim_{\substack {x\to x_0 \\ g(x)\neq0}}\left(
  \frac{1}{g(x)}-\frac{1}{g(x+g(x)^2s)}\right)
  =g'(x_0)(s).
$$
\end{lemma}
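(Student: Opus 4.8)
The plan is to reduce everything to a first-order Taylor expansion of $g$ at the point $x$, exploiting the fact that the perturbation $g(x)^2 s$ is quadratically small compared with $g(x)$ itself. Writing $w=g(x)$ and letting $g'(x)$ denote the differential of $g$ at $x$, I expect the expansion
$$g(x+w^2 s)=g(x)+g'(x)(w^2 s)+O(w^4)=w+w^2\,g'(x)(s)+O(w^4),$$
in which the quartic remainder arises from the second-order term of Taylor's theorem applied to the displacement $w^2 s$, and where I have used linearity of the differential to write $g'(x)(w^2 s)=w^2\,g'(x)(s)$.

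First I would make this remainder precise and uniform. Since $g$ is holomorphic, its second-order derivatives are bounded by some constant $C$ on a fixed compact neighbourhood of $x_0$, and Taylor's theorem with remainder then gives $|g(x+w^2 s)-w-w^2 g'(x)(s)|\le C|s|^2|w|^4$ for all $x$ near $x_0$. In particular $g(x+w^2 s)=w(1+w\,g'(x)(s)+O(w^3))$, so for $x$ sufficiently close to $x_0$ this quantity is nonzero whenever $w=g(x)\neq0$; this guarantees that the expression under the limit is actually well defined along the approach.

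Next I would expand the reciprocal. Factoring $w$ out of the denominator and using $(1+u)^{-1}=1-u+O(u^2)$ for small $u$, I obtain
$$\frac{1}{g(x+w^2 s)}=\frac{1}{w}\bigl(1-w\,g'(x)(s)+O(w^2)\bigr)=\frac{1}{w}-g'(x)(s)+O(w).$$
Subtracting this from $1/g(x)=1/w$ cancels the singular term $1/w$ exactly, leaving
$$\frac{1}{g(x)}-\frac{1}{g(x+g(x)^2 s)}=g'(x)(s)+O(w).$$
Letting $x\to x_0$ with $g(x)\neq0$, we have $w\to0$, so the error term vanishes; and since $g'$ is continuous we have $g'(x)(s)\to g'(x_0)(s)$, which is the asserted limit.

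The main obstacle is not the algebra but the bookkeeping of the error estimates: everything hinges on the $O(\cdot)$ terms being uniform on a neighbourhood of $x_0$ rather than merely pointwise, and on confirming that $g(x+g(x)^2 s)$ does not vanish along the limit. Both follow from bounding $g$ together with its first two derivatives on a fixed compact neighbourhood of $x_0$, which holomorphy supplies automatically. The degenerate case $s=0$ is trivial, since both sides are then identically zero.
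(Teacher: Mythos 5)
Your proof is correct and follows essentially the same route as the paper's: a first-order Taylor expansion $g(x+g(x)^2s)=g(x)+g'(x)(g(x)^2s)+O(|g(x)|^4)$, a check that the perturbed value is nonzero near $x_0$ when $g(x)\neq0$, cancellation of the singular term $1/g(x)$, and passage to the limit. The only cosmetic difference is that you expand the reciprocal via $(1+u)^{-1}=1-u+O(u^2)$ whereas the paper combines the two terms into a single fraction, and your explicit attention to the uniformity of the error terms on a compact neighbourhood is a welcome tightening of a point the paper leaves implicit.
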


\begin{proof}
First, in order for the limit to make sense,
we need to verify that $x_0$ has a neighbourhood
on which $g(x+g(x)^2s)$ vanishes only when $g(x)$ vanishes.
We use the approximation
\begin{align} 
  g(x+h)&=g(x)+g'(x)(h)+O(|h|^2). \label{eq:approx} \\
\intertext{With $h=g(x)^2s$, this gives}
  g(x+g(x)^2s)&=g(x)+g'(x)(g(x)^2s)+O(|g(x)^4|). \notag \\
\intertext{When $x\neq x_0$ and $g(x)\neq0$,
    if $x$ is close to $x_0$, then the second and third terms of the right hand side
    are much smaller than the first,
    so $g(x+g(x)^2s)\neq0$, as required.
\newline\indent
    Now, using \eqref{eq:approx} again, we obtain
} 
  \frac{1}{g(x)}-\frac{1}{g(x+h)}
      &= \frac{g(x+h)-g(x)}{g(x)g(x+h)} \notag \\
      &= \frac{g'(x)(h)+O(|h|^2)}{g(x)(g(x)+g'(x)(h)+O(|h|^2))}.\notag \\
\intertext{(In the event that $g(x+h)$ vanishes, we interpret
    the fractions as meromorphic functions.)   
    Replacing $h$ with $g(x)^2 s$,
    and using the fact that $g'(x)$ is a linear map, gives
} 
  \frac{1}{g(x)}-\frac{1}{g(x+g(x)^2 s)}
      &=\frac{g'(x)(g(x)^2 s)+O(|g(x)|^4)}{g(x)(g(x)+g'(x)(g(x)^2 s)+O(|g(x)|^4))} \notag \\
      &=\frac{g'(x)(s)+O(|g(x)|^2)}{1+g(x)g'(x)(s)+O(|g(x)|^3)}. \notag
\end{align}
As $x\to x_0$ this expression tends to $g'(x_0)(s)$.
\end{proof}

\begin{remark}
The exponent 2 in the lemma corresponds to a doubly twisted line bundle
in the proof of Proposition~\ref{prop:Cn} below.
A single twist would not be sufficient:
for example, if we take $n=1$ and $g(x)=x$
then for $s\neq0$ the expression
$\dfrac{1}{g(x)}-\dfrac{1}{g(x+g(x)s)}$
does not have a finite limit as $x\to0$.
\end{remark}

\begin{proposition} \label{prop:Cn}
Let $m:\complex^n\to\projspace{1}$ be a holomorphic map,
and suppose $m$ can be written in the form
$m=f+1/g$ for holomorphic functions $f$ and $g$.
Then $\complex^{n+1}\setminus\Gamma_m$ is Oka.
\end{proposition}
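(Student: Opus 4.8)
The plan is to exhibit a finite dominating family of sprays on $\bigcomp$, which suffices for the Oka property since every weakly subelliptic manifold is Oka. The key observation is that the decomposition $m=f+1/g$ gives us explicit control over the space. Writing a point of $\complex^{n+1}$ as $(x,w)$ with $x\in\complex^n$ and $w\in\complex$, the graph $\Gamma_m$ consists of those $(x,w)$ with $g(x)\neq0$ and $w=f(x)+1/g(x)$, i.e.\ where $(w-f(x))g(x)=1$. Thus on the locus $g\neq0$ the complement is cut out by $(w-f(x))g(x)\neq1$, and we may straighten the picture: the map $(x,w)\mapsto(x,(w-f(x))g(x))$ is a biholomorphism of $\{g\neq0\}\times\complex$ carrying $\Gamma_m$ to the constant-value-$1$ slice, so over $\{g\neq0\}$ the complement looks like a product with $\complex\setminus\{1\}\cong\cstar$, which is Oka. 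The whole difficulty is therefore concentrated at the zero set $Z(g)$, where the pole of $m$ disappears and the fibre of the projection over $x$ jumps from $\cstar$ (generic) to all of $\complex$.

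First I would construct the ``easy'' sprays that handle directions away from $Z(g)$ and the fibre direction. The fibre-direction spray comes from the $\cstar$-action on the straightened model: translation by $w\mapsto w+\epsilon$ combined with a correction, giving a complete vector field tangent to the fibres wherever $g\neq0$; dominating the base directions away from $Z(g)$ is handled by lifting coordinate vector fields from $\complex^n$ and adding a vertical correction so that the flow preserves $\Gamma_m$. The genuinely new ingredient is a spray that dominates the \emph{normal} directions to $Z(g)$ at points lying over $Z(g)$, and this is exactly what Lemma~\ref{lemma:localise} is designed to produce. I would take a spray whose parameter bundle is a twist of the trivial bundle over $\complex^{n+1}$ and whose defining map sends the parameter $s\in\complex^n$ to a point obtained by the substitution $x\mapsto x+g(x)^2s$ (the ``double twist'' flagged in the remark after Lemma~\ref{lemma:localise}), with the $w$-coordinate adjusted by the quantity $1/g(x)-1/g(x+g(x)^2s)$ so that membership in the complement is preserved. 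Lemma~\ref{lemma:localise} then shows that at a point with $g(x_0)=0$ the differential of this map in the $s$-variable realises the linear map $s\mapsto g'(x_0)(s)$, which is surjective onto the normal directions precisely because $g$ is not identically zero (so $g'(x_0)\neq0$ at smooth points of $Z(g)$) and because $Z(g)$ has codimension $1$.

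I expect the main obstacle to be verifying that these candidate sprays are genuinely \emph{globally defined} holomorphic maps $E\to\bigcomp$ and that the family is dominating at \emph{every} point, not merely at generic points of $Z(g)$. Two points need care: first, the factor $g(x)^2$ rather than $g(x)$ in the twist is essential to guarantee that the correction term $1/g(x)-1/g(x+g(x)^2s)$ extends holomorphically across $Z(g)$ rather than blowing up—this is precisely the content of Lemma~\ref{lemma:localise} and the reason a single twist fails—so I must check that with the double twist the spray map is holomorphic and sends $E$ into the complement, not accidentally onto $\Gamma_m$. Second, at singular points of $Z(g)$ (where $g'$ vanishes) the single localisation spray may degenerate, so I anticipate needing several such sprays built from different powers or from the partial derivatives of $g$, chosen so that their images together span $T_pX$ at every point of $Z(g)$; assembling a finite family that works uniformly on an arbitrary compact set $K$, and confirming dominance there, is the crux of the argument. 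Once the family is shown to dominate at every point of $\bigcomp$, weak subellipticity and hence the Oka property follow immediately.
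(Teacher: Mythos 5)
Your strategy (weak subellipticity via a finite spray family, with Lemma~\ref{lemma:localise} supplying a twisted spray near $Z(g)$) is in the right circle of ideas, but the family you describe does not exist on $\complex^{n+1}\setminus\Gamma_m$ itself, and this is not a technical nuisance but the central difficulty. Two things go wrong. First, your localisation spray $(x,w;s)\mapsto\bigl(x+g(x)^2s,\; w+\frac{1}{g(x)}-\frac{1}{g(x+g(x)^2s)}\bigr)$ (up to the $f$-terms) is not globally defined: Lemma~\ref{lemma:localise} tames the correction only across $\{g(x)=0\}$, but the term $1/g(x+g(x)^2s)$ still has genuine poles on the nonempty set $\{g(x)\neq 0,\ g(x+g(x)^2s)=0\}$ (for $n=1$, $g(x)=x$, this is the locus $s=-1/x$). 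A spray must be a holomorphic map defined on an entire vector bundle over the manifold, so this object does not qualify; and no twist repairs it, since the additive correction is forced to equal $m(x')-m(x)$, which blows up whenever the displaced base point $x'$ reaches $Z(g)$ from a base point with $g(x)\neq0$. Second, even where your spray is defined, its differential at a point over $Z(g)$ is purely vertical: the base component of the $s$-derivative is $g(x_0)^2(\cdot)=0$, and the quantity $g'(x_0)(s)$ produced by Lemma~\ref{lemma:localise} is a tangent vector in the fibre ($w$-) direction, not, as you assert, in the directions normal to $Z(g)$ in the base. Your other base-direction sprays (coordinate fields with vertical corrections involving $dm$) are likewise undefined over $Z(g)$, so nothing in your family dominates any base direction at any point over $Z(g)$ --- independently of whether $Z(g)$ is smooth there, so sprays built from partial derivatives of $g$ cannot save the argument either.

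The idea you are missing, and which the paper uses to dissolve both problems at once, is to pass to a covering space before building sprays. Reducing to $f=0$, so $X=\{(x,y):g(x)y\neq1\}$, the paper constructs a covering $\pi:Y\to X$ whose layers $Y_k$ are full copies of $\complex^{n+1}$: over $\{g\neq0\}$ the punctured fibres $\complex\setminus\{1/g(x)\}$ are unwound by the exponential-like map $y\mapsto-\phi(g(x),y)$ (Buzzard--Lu's function $\phi$), while over $\{g=0\}$ the covering restricts to disjoint identity copies. On each layer the translation spray $\sigma_k([x,y,k];s,t)=[x+s,y+t,k]$ is dominating everywhere --- there is no deleted graph left to avoid, which removes your first problem --- and Lemma~\ref{lemma:localise} with the $g(x)^2$ twist is used only to extend $\sigma_k$ holomorphically across $\{g=0\}$ to a spray $\tilde\sigma_k$ on a twisted bundle over all of $Y$. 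That extension is allowed to be non-dominating at points of the other layers lying over $Z(g)$, which removes your second problem: a compact $K\subset Y$ meets only finitely many layers, and each point of $K$ is dominated by its own layer's spray $\tilde\sigma_k|_{Y_k}=\sigma_k$. Hence $Y$ is weakly subelliptic, so Oka, and $X$ is Oka because a manifold with an Oka covering space is Oka. The jump in fibre topology from $\cstar$ to $\complex$ across $Z(g)$ is precisely what blocks any direct spray construction on $X$ of the kind you propose, and unwinding the $\cstar$-fibres is what the covering space is for.
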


\begin{proof}
If $g=0$, so that $m=\infty$,
then $\Gamma_m=\emptyset$,
so $\complex^{n+1}\setminus\Gamma_m=\complex^{n+1}$ is Oka.
For the rest of the proof, assume that $g\neq0$.

We will write points of $\complex^{n+1}$ as $(x,y)$ or $(s,t)$
where $x,s\in\complex^n$ and $y,t\in\complex$.
Let $X$ denote the complement of the graph of $1/g$; i.e.\
$$X=\{(x,y)\in\complex^{n+1}:g(x)y\neq 1\}.$$
The map $X\to\complex^{n+1}\setminus\Gamma_m$
given by $(x,y)\mapsto(x,y+f(x))$ is a biholomorphism.
Hence it suffices to prove that $X$ is Oka.

We begin by describing a covering space $Y$ of $X$.
Then we shall exhibit sprays on trivial bundles
over certain subsets of the covering space.
Finally, these sprays will be extended to sprays
on twisted bundles over $Y$, using the above lemma.
(This is the localisation step referred to above.)
This will be sufficient to establish that
$Y$ is weakly subelliptic, hence Oka.
Therefore $X$ is Oka.

The covering space $Y$ is constructed as follows.
Define an equivalence relation $\sim$ on $\complex^{n+1}\times\integers$ by
\begin{multline} \label{eq:equiv}
    (x,y,k)\sim(x',y',k')\text{ if }
    x=x',\> g(x)\neq0\text{ and } \\
    g(x)(y-y')=(k-k')2\pi i.
\end{multline}
Then $Y$ is the quotient space $(\complex^{n+1}\times\integers)/\sim$.
From now on we will write $[x,y,k]$ as shorthand for the equivalence
class in $Y$ of $(x,y,k)$,
and $Y_k$ for the $k$th ``layer'' $\{[x,y,k]:(x,y)\in\complex^{n+1}\}$.

Note that $Y$ can be described in concrete terms
as a hypersurface in $\complex^{n+2}$:
see Remark~\ref{rem:concrete}.
The description of $Y$ used here is chosen to emphasise
the simple form of the sprays $\sigma_k$ described below.

It is straightforward to verify that $Y$ is a Hausdorff space.
We can map each $Y_k$ bijectively to $\complex^{n+1}$ by sending $[x,y,k]$ to $(x,y)$.
Thus $Y$ has the structure of an $(n+1)$-dimensional complex manifold.

By way of motivation for this construction,
observe that if $x_0\in\complex^n$ with $g(x_0)\neq0$,
then the set $\{[x,y,k]\in Y:x=x_0\}$ is a copy of $\complex$,
whereas if $g(x_0)=0$, then $\{[x,y,k]\in Y:x=x_0\}$
is a countable union of disjoint copies of $\complex$.
The covering map described below looks like
an exponential map when $g\neq0$, but the identity map when $g=0$.
The construction involves a holomorphically varying family of holomorphic maps
which include both exponentials and the identity.

We follow Buzzard and Lu \cite[page~645]{Buzzard-Lu-2000} in defining
a function $\phi$ on $\complex^2$ by
\begin{align}
    \phi(x,y)&=\left\{
    \begin{array}{cl}
        \dfrac{e^{xy}-1}{x} & \text{if }x\neq0 \label{eq:BLmap} \\
        y & \text{if }x=0
    \end{array}
    \right. \\
    &= y+\frac{xy^2}{2}+\frac{x^2y^3}{3!}+\cdots. \nonumber
\end{align}
From the series expansion we see that $\phi$ is holomorphic.
Then we define $\pi:Y\to X$ by
$$\pi[x,y,k]=(x,-\phi(g(x),y)).$$
If $(x,y)$ is a point of $X$ with $g(x)=0$,
then the fibre over $(x,y)$ is the set
$$\pi^{-1}(x,y)=\{[x,-y,k]:k\in\integers\}.$$
If $g(x)\neq0$, then a set of unique representatives for
$\pi^{-1}(x,y)$ is given by
$$\{[x,\frac{\log(1-g(x)y)}{g(x)},0]\}$$
for all possible branches of the logarithm.
It follows that all fibres of $\pi$ are isomorphic to $\integers$.
It can be verified that every point of $X$ has
a neighbourhood that is evenly covered by $\pi$,
and so $\pi$ is a covering map.

For each layer $Y_k$ of $Y$
there is a dominating spray $\sigma_k$ on the trivial bundle
$Y_k\times\complex^{n+1}$, given by
\begin{equation*}
\sigma_k([x,y,k];s,t)=[x+s,y+t,k].
\end{equation*}
We wish to construct a bundle $E_k$ over $Y$
and a spray $\tilde\sigma_k:E_k\to Y$
such that $\tilde\sigma_k$ agrees with $\sigma_k$
with respect to a trivialisation of $E_k|_{Y_k}$.
Since every compact subset of $Y$ is covered by finitely many $Y_k$,
this will establish that $Y$ is weakly subelliptic
(Definition~\ref{def:spray}).

To simplify the notation, we will only describe $E_0$ and $\tilde\sigma_0$;
the construction for $k\neq0$ is similar.
Define open subsets $U_1$ and $U_2$ of $Y$ by
\begin{align*}
    U_1&=\{[x,y,k]:k\neq0\}, \\
    U_2&=\{[x,y,k]:k=0\}=Y_0.
\end{align*}
As each $[x,y,k]$ is an equivalence class,
these sets are not in fact disjoint.
(This is the only part of the proof
where the assumption $g\neq0$ is required.)
The intersection $U_1\cap U_2$
is the set of points $[x,y,0]$ with $g(x)\neq0$.
The bundle $E_0$ is described by local trivialisations
$E_0|_{U_\alpha}\to U_\alpha\times\complex^{n+1}$,
$\alpha=1,2$,
with transition map
$\theta_{12}:(U_1\cap U_2)\times\complex^{n+1}\to (U_1\cap U_2)\times\complex^{n+1}$
given by
\begin{equation} \label{eq:transition}
    \theta_{12}([x,y,0];s,t)=([x,y,0];g(x)^2 s,t).
\end{equation}
\noindent
Define $\tilde\sigma_0$ by
\begin{align*}
\tilde\sigma_0|_{U_1}([x,y,k];s,t)
  &= 
  \left\{
    \begin{array}{ll}
        [x+g(x)^2 s,y-k2\pi i/g(x)+t,0] & \text{if }g(x)\neq0, \\
        {}[x ,y-k2\pi i g'(x)(s)+t,k] & \text{if }g(x)=0,
    \end{array}
   \right. \\
\tilde\sigma_0|_{U_2}([x,y,0];s,t)
  &= \sigma_0([x,y,0];s,t) = [x+s,y+t,0].
\end{align*}
The fact that $\tilde\sigma_0|_{U_1}$ is continuous
follows from Lemma~\ref{lemma:localise} together with
equation (\ref{eq:equiv}).
It is easy to verify from equations (\ref{eq:equiv}) and (\ref{eq:transition})
that $\tilde\sigma_0|_{U_1}$ and $\tilde\sigma_0|_{U_2}$
agree on $U_1\cap U_2$.
Thus $\tilde\sigma_0$ is a well defined holomorphic map
from $E_0$ to $Y$ extending $\sigma_0$.
Finally, $\tilde\sigma_0([x,y,k];0,0)=[x,y,k]$,
so $\tilde\sigma_0$ is a spray.
This completes the proof.
\end{proof}

\begin{remark} \label{rem:concrete}
The covering space $Y$ from the above proof
can be embedded into $\complex^{n+2}$
by the map
$[x,y,k]\mapsto(x,-\phi(g(x),y),g(x)y+2\pi ik)$.
The image of this map is the set
    $$Z=\{(x,y,z)\in\complex^n\times\complex\times\complex:
    1-g(x)y=e^z\}$$
and a covering map $Z\to X$ is given by $(x,y,z)\mapsto(x,y)$.
\end{remark}

\begin{proof}[Proof of Theorem \ref{thm:mero_general_base}]
We will write $\pi_1$ and $\pi_2$ for
the projections of the complement $\littlecomp$ onto
$X$ and $\complex$ respectively.
The map $\sigma:X\to\littlecomp$ given by
$\sigma(x)=(x,f(x))$ is a holomorphic section of $\pi_1$.

First suppose $\littlecomp$ is Oka.
The convex interpolation property for $X$
can easily be verified as follows.
Let $\phi:T\to X$ be a holomorphic map from a contractible subvariety
$T$ of some $\complex^n$.
Then by the CIP of $\littlecomp$,
the composite map $\sigma\circ\phi:T\to\littlecomp$
has a holomorphic extension $\psi:\complex^n\to\littlecomp$.
The composition $\pi_1\circ\psi$ is a map $\complex^n\to X$
extending $\phi$.
Therefore $X$ is Oka.

Conversely, suppose $X$ is an Oka manifold, and $m:X\to\projspace{1}$ a holomorphic map
with $m=f+1/g$
as in the statement of the theorem.
We will verify the CIP for $\littlecomp$.

Suppose $T$ is a contractible subvariety of $\complex^n$ for some $n$,
and let $\phi:T\to \littlecomp$
be a holomorphic map.
We want to find a holomorphic map
$\mu:\complex^n\to\littlecomp$
which extends $\phi$.

First of all we can use the CIP for $X$ to extend
the composite map $\pi_1\circ\phi$
to a holomorphic map $\psi:\complex^n\to X$.
This is indicated in the following diagram.

$$  
\xymatrix{
    T \ar@{^{(}->}[dd]_\iota \ar[r]^-\phi
        & (X\times\complex)\setminus\Gamma_m \ar[dd]^{\pi_1} \ar[r]^-{\pi_2}
        & \complex \\
    \\
    \complex^n \ar@{.>}[uur]^\mu \ar[r]^\psi
        & X \ar[r]^m & \projspace{1}}
$$

Now we have a holomorphic map $m\circ\psi:\complex^n\to\projspace{1}$.
In fact $m\circ\psi=f\circ\psi+1/(g\circ\psi)$,
so we know by Proposition~\ref{prop:Cn} that $\bigcomp$ is Oka.
We want to map $T$ into $\bigcomp$,
then use the CIP to extend this map.

Define $\alpha:T\to\complex^{n+1}$ by
$$\alpha(x)=(\iota(x),\pi_2(\phi(x))).$$
Since $\phi(x)$ is an element of $\littlecomp$,
it follows that $\pi_2(\phi(x))$ is never equal to $m(\pi_1(\phi(x)))$.
By the definition of $\psi$,
this means that $\pi_2(\phi(x))\neq m(\psi(\iota(x)))$ for all $x\in T$.
Therefore the image of $\alpha$ is contained in $\bigcomp$.

The CIP for $\bigcomp$ tells us that $\alpha$ extends to a map
$\beta:\complex^n\to\bigcomp$, as in the following diagram.
(The map $\pi_2:\bigcomp\to\complex$ is the restriction to $\bigcomp$
of the projection of $\complex^n\times\complex$ onto the last coordinate.
The use of $\pi_2$ for two different projection maps should not cause any confusion,
as the domain is always clear from the context.)

$$  
\xymatrix{
    \complex & \complex^{n+1}\setminus\Gamma_{m\circ\psi} \ar[l]_-{\pi_2} &
    T \ar[l]_-\alpha \ar@{^{(}->}[dd]_\iota \ar[r]^-\phi
        & (X\times\complex)\setminus\Gamma_m \ar[dd]^{\pi_1} \ar[r]^-{\pi_2}
        & \complex \\
    \\
    && \complex^n \ar@{.>}[uul]^\beta \ar@{.>}[uur]^\mu \ar[r]^\psi
        & X \ar[r]^m & \projspace{1}}
$$

Finally, we can define $\mu:\complex^n\to\littlecomp$ by
$$\mu(x)=(\psi(x),\pi_2(\beta(x))).$$
Since $\pi_2(\beta(x))$ can never equal $m(\psi(x))$,
we see that the image of $\mu$ is indeed contained in $\littlecomp$.
And from the definitions,
the fact that $\beta$ is an extension of $\alpha$
implies that $\mu$ is an extension of $\phi$.
\end{proof}

\begin{remark} \label{rem:elliptic}
As mentioned in Remark~\ref{rem:section} above,
the projection map from $\littlecomp$ onto the first factor
is an elliptic submersion, in the case where $m$ can be written
as $f+1/g$.
To prove this, it is necessary to construct a dominating fibre spray
(\cite[page 24]{Forstneric-Larusson-2011}).
This can be done using the function $\phi$
defined by equation (\ref{eq:BLmap}) in the proof of 
Proposition~\ref{prop:Cn} above.
As previously, there is no loss of generality in assuming $f=0$.
Denoting points of $X\times\complex$ by $(x,y)$
with $x\in X$ and $y\in\complex$,
define a map $s:(\littlecomp)\times\complex\to \littlecomp$
by $$s(x,y,t)=(x,ye^{tg(x)}-\phi(g(x),t)).$$
The verification that the image of $s$ is indeed contained in $\littlecomp$,
and that $s$ is a dominating fibre spray, is routine.
\end{remark}


\end{document}